\newtheorem{sat}{Theorem}				\newtheorem{lem}{Lemma}
\newtheorem{kor}{Corollary}			\newtheorem{prop}{Proposition}
\newtheorem*{defi*}{Definition}	\newtheorem*{bei*}{Example}
\newtheorem*{sat*}{Theorem}			\newtheorem*{kor*}{Corollary}
\newtheorem*{rmk*}{Remark}
\newtheorem*{conj*}{Conjecture}
\let\ssection=\section
\renewcommand{\section}{\setcounter{equation}{0}\ssection}
\newtheorem*{namedtheorem}{\theoremname}
\newcommand{\theoremname}{testing}
\newenvironment{named}[1]{\renewcommand{\theoremname}{#1}\begin{namedtheorem}}{\end{namedtheorem}}
\theoremstyle{remark}
\newcommand{\BR}{\mathbb R}			
\newcommand{\BN}{\mathbb N}			
\newcommand{\BS}{\mathbb S}			
\newcommand{\BP}{\mathbb P}
		\newcommand{\CL}{\mathcal L}
		\newcommand{\CX}{\mathcal X}
\newcommand{\D}{\partial}
\DeclareMathOperator{\inj}{inj}
\DeclareMathOperator{\diam}{diam}
\DeclareMathOperator{\interior}{interior}
\DeclareMathOperator{\cut}{cut}
\DeclareMathOperator{\Conj}{Conj}
\DeclareMathOperator{\length}{length}
\begin{document}

\title[]{A characterization of round spheres in terms of blocking light}
\author{Benjamin Schmidt \& Juan Souto}
\begin{abstract}
A closed Riemannian manifold $M$ is said to have cross (compact rank one symmetric space) blocking if whenever $p\neq q$ are less than the diameter apart, all light rays from $p$ can be shaded away from $q$ with at most two point shades.  Similarly, a closed Riemannian manifold is said to have sphere blocking if for each $p \in M$ all the light rays from $p$ are shaded away from $p$ by a single point shade.  We prove that Riemannian manifolds with cross and sphere blocking are isometric to round spheres.  
\end{abstract}
\maketitle

In this note we characterize constant curvature spheres in terms of light blocking properties. 

 \begin{defi*}[Light]
Let $X,Y$ be two nonempty subsets of a Riemannian manifold $M$, and let $G_M(X,Y)$ denote the set of non-constant unit speed parametrized geodesics  $\gamma:[0,L_{\gamma}] \to M$ with initial point $\gamma(0) \in X$ and terminal point $\gamma(L_{\gamma})\in Y$.  The {\em light from X to Y} is the set 
$$L_{M}(X,Y)=\{\gamma \in G_{M}(X,Y) | \interior(\gamma) \cap (X \cup Y)= \emptyset \}.$$  
A subset $Z\subset M$ {\em blocks} the light from $X$ to $Y$ if  the interior of every $\gamma\in L_M(X,Y)$ meets $Z$.
\end{defi*}

Intuitively, we are postulating that $X$ emits light traveling along geodesics, that $Y$ consists of receptors, and that $X$ and $Y$ are opaque while the remaining medium $M\setminus\{X\cup Y\}$ is transparent.  From this point of view, $L_M(X,Y)$ is the set of light rays from $X$ to $Y$ and a set $Z$ blocks the light from $X$ to $Y$ if it completely shades $X$ away from $Y$. This simple model ignores diffraction, the dual nature of light, and all aspects of quantum mechanics.

A well known result of Serre \cite{Se} asserts that for compact $M$ and points $x,y \in M$, the set $G_M(x,y)$ of geodesic segments joining $x$ and $y$ is always infinite. In contrast, $L_M(x,y)$ is sometimes infinite and sometimes not. For instance, if $x$ and $y$ are different points on the standard round sphere $\BS^n$ with distance less than $\pi$, then $L_{\BS^n}(x,y)$ consists of exactly two elements. In particular, we see that, under the same assumptions, it suffices to declare two additional points in $\BS^n$  to be opaque in order to block all the light rays from $x$ to $y$.

\begin{defi*}[Blocking Number]
Let $x,y\in M$ be two (not necessarily distinct) points in $M$.  The \textit{blocking number} $b_{M}(x,y)$ for $L_{M}(x,y)$ is defined by 
$$ b_{M}(x,y)=\inf \{n \in \mathbb{N} \cup \{\infty\} \vert L_{M}(x,y) \textup{ is blocked by $n$ points} \}. $$
\end{defi*}

The study of blocking light (also known as security) seems to have originated in the study of polygonal billiard systems and translational surfaces (see e.g. \cite{Fo}, \cite{Gu1}, \cite{Gu2}, \cite{Gu3}, \cite{HiSn}, \cite{Mo1}, \cite{Mo2}, \cite{Mo3}, \cite{Mo4}, and \cite{Ta}). More recently, blocking light has been studied in Riemannian spaces (see e.g. \cite{BuGu}, \cite{Gu4}, \cite{GuSc}, \cite{He}, and \cite{LaSc}). Here we give a characterization of the round sphere in terms of its blocking properties.

If $x,y$ are two distinct points in the standard round sphere $\BS^n$ closer than $\pi$ then, as remarked above, $b_{\BS^n}(x,y)\le 2$. This property does not characterize the round sphere amongst all closed Riemannian manifolds. In fact, every compact rank one symmetric space, or CROSS for short, has the following property:
\begin{itemize}
\item[${ }$] {\em Cross blocking}: For every distinct pair of points $x,y\in M$ with $d_M(x,y)<\diam(M)$, we have $b_M(x,y)\le 2$.
\end{itemize}
Apart from cross blocking, the round sphere also has the following property:
\begin{itemize}
\item[${ }$]{\em Sphere blocking}: For every point $x\in M$, we have $b_M(x,x)=1$.
\end{itemize}

\vspace{0.2cm}
The CROSSes are classified and consist of the round spheres $\BS^n$, the projective spaces $K\BP^n$ where $K$ denotes one of $\mathbb{R}$, $\mathbb{C}$, or $\mathbb{H}$, and the Cayley projective plane, each one endowed with its symmetric metric. It is not difficult to check that the round sphere is the only CROSS with sphere blocking.

In \cite{LaSc} it was conjectured that a closed Riemannian manifold with cross and sphere blocking is isometric to a round sphere. We prove that this is the case:

\begin{sat}\label{thm:sphere}
A closed Riemannian manifold $M$ has cross and sphere blocking if and only if $M$ is isometric to a round sphere.
\end{sat}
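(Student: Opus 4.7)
The forward direction is immediate: the round sphere manifestly has both properties, as noted in the excerpt. For the reverse, my plan is to demonstrate that the two blocking hypotheses force $M$ to be a Wiedersehen manifold---a Blaschke manifold on which every geodesic from every point $x$ passes through a common antipode $\sigma(x)$ at a single fixed time---and then to invoke the classical resolution of the Blaschke conjecture for spheres by Green (in dimension two) and by Berger, Kazdan, and Yang (in higher dimensions) to conclude that $M$ is isometric to a round sphere.

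I would first use sphere blocking to assign to each $x\in M$ a point $\sigma(x)$ blocking $L_M(x,x)$. I expect $\sigma(x)$ to be uniquely characterized by $x$ (otherwise one can modify loops to find one avoiding any would-be second blocker), to give a continuous involution $\sigma:M\to M$, and to satisfy $d(x,\sigma(x))=\diam(M)$. The diameter equality can be squeezed out of cross blocking: if $d(x,\sigma(x))<\diam(M)$, then $L_M(x,\sigma(x))$ would be blocked by two points, and combining this with the abundance of loops at $x$ guaranteed by sphere blocking should yield a contradiction.

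The heart of the argument is then to show that every unit-speed geodesic $\gamma_v$ issued from $x$ actually passes through $\sigma(x)$, not only those that happen to be geodesic loops. The set of directions $v$ in the unit sphere of $T_xM$ whose trajectory hits $\sigma(x)$ is closed by continuity of $\exp_x$ together with compactness of $M$, and contains every loop direction by sphere blocking. I would try to show that the set of loop directions at $x$ is dense in the unit sphere of $T_xM$, using Poincar\'e recurrence for the geodesic flow on the unit tangent bundle together with cross blocking to upgrade the near-loop recurrence statement into one producing genuine loops at $x$. Once every direction is hit, a further application of cross blocking to pairs $(x,y)$ with $y$ close to $\sigma(x)$ should force the first passage time of $\gamma_v$ through $\sigma(x)$ to be independent of $v$ (and then of $x$ by continuity), which is the Wiedersehen condition.

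The main obstacle I anticipate is this density step: passing from ``all geodesic loops at $x$ are blocked by $\sigma(x)$'' to ``every geodesic issued from $x$ meets $\sigma(x)$.'' Sphere blocking alone controls only those geodesics that return to $x$, and extending control to all geodesics requires a combined use of recurrence of the geodesic flow and of the rigidity imposed by cross blocking. Once this is in place, identification of $M$ with a round sphere follows from the Wiedersehen and Blaschke theorems cited above.
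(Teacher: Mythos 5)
Your outline diverges substantially from the paper's argument, and the step you yourself flag as the main obstacle is a genuine gap that your proposed tools will not close. Sphere blocking only constrains geodesics that return to $x$ and form light rays from $x$ to itself; it gives you no control whatsoever over the geodesics issuing from $x$ that never return. Your plan to bridge this via Poincar\'e recurrence does not work: recurrence of the geodesic flow produces, for almost every direction $v$, trajectories that return \emph{close} to $(x,v)$ in the unit tangent bundle, but on a general closed manifold there is no closing-type mechanism that upgrades a near-loop into a genuine geodesic loop based at the \emph{same} point $x$. Indeed, for a generic metric the initial directions of geodesic loops at a fixed point form a countable, non-dense subset of the unit sphere in $T_xM$, so the density of loop directions is exactly the kind of strong structural statement one cannot assume; it is essentially equivalent to already knowing the rigidity you are trying to prove. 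Cross blocking applied to $(x,x')$ for $x'$ near $x$ tells you only that light rays from $x$ to $x'$ admit a two-point blocker, which does not manufacture a loop. Several earlier steps are also asserted rather than proved: the uniqueness and continuity of the blocker $\sigma(x)$, the identity $d(x,\sigma(x))=\diam(M)$, and the closedness of the set of directions hitting $\sigma(x)$ (which requires a uniform bound on the hitting times that compactness of $M$ alone does not provide).

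For comparison, the paper takes a different and more economical route: it first proves (Proposition \ref{blocking-blaschke}) that a cross-blocked manifold which is \emph{not} Blaschke contains a simple closed geodesic $\gamma$ of length $2\inj(M)<2\diam(M)$, using Morse theory on spaces of broken geodesics to produce a forbidden third connecting geodesic near a non-injective point of the cut locus. It then uses total convexity (Corollary \ref{chord}) to find a geodesic chord $\eta$ meeting $\gamma$ only at its endpoints; if the endpoints coincide, sphere blocking is violated, and if not, $\eta$ together with the two arcs of $\gamma$ forces $b_M(x,y)\ge 3$, so cross blocking forces $d(x,y)=\diam(M)$ and hence $L(\gamma)\ge 2\diam(M)$, a contradiction. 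Having established that $M$ is Blaschke, it concludes by citing the earlier result that Blaschke manifolds with sphere blocking are round spheres. If you want to pursue your Wiedersehen strategy, you would in effect need to reprove that cited result and the Blaschke reduction simultaneously, and the missing density step is precisely where the real work lies.
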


In order to prove Theorem \ref{thm:sphere} we show that manifolds as in the statement are Blaschke manifolds. Recall that a compact Riemannian manifold $M$ is said to be {\em Blaschke} if its injectivity radius and diameter coincide. Berger \cite{Be} proved that a Blaschke manifold diffeomorphic to the sphere is in fact isometric to a round sphere.  This was used in \cite{LaSc} to prove Theorem \ref{thm:sphere} for Blaschke manifolds.  
 \vspace{0.2cm}
 
 In \cite{LaSc} it was also conjectured that a closed Riemannian manifold with cross blocking is isometric to a compact rank one symmetric space.  We prove that this is the case in dimension two:
 
 \begin{sat}\label{thm:surfaces}
 A closed Riemannian surface $M$ has cross blocking if and only if $M$ is isometric to a constant curvature sphere or projective plane. 
 \end{sat}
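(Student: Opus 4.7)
The plan is to reduce Theorem \ref{thm:surfaces} to the classification of Blaschke surfaces. Recall that Green's theorem (and its projective plane counterpart) asserts that every Blaschke metric on a closed surface has constant curvature, so $M$ is isometric to a round sphere or to a standard projective plane. It therefore suffices to prove that a closed Riemannian surface $M$ with cross blocking satisfies $\inj(M)=\diam(M)$.

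I would first restrict the topology of $M$ to that of $\BS^2$ or $\BR\BP^2$. If not, then $\chi(M)\le 0$ and $\pi_1(M)$ is infinite. Passing to the universal cover $\wt M$, for any $p,q\in M$ with $d(p,q)<\diam(M)$ the light rays $L_M(p,q)$ lift to geodesic segments from a chosen $\wt p$ to the infinitely many distinct lifts $\wt q_\alpha$ of $q$ under the deck action. Each lift of a putative blocker $z_i$ lies on at most one such segment, since a starting point and a direction determine the geodesic in $\wt M$. A growth computation---polynomial for $\chi=0$, exponential for $\chi<0$---shows that for generic $p,q$ the directions from $\wt p$ to the orbit of two blockers cannot match the directions to all $\wt q_\alpha$, contradicting cross blocking. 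Hence $M$ is homeomorphic to $\BS^2$ or $\BR\BP^2$.

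The main step is then to upgrade cross blocking to the Blaschke condition $\inj(M)=\diam(M)$. Suppose for contradiction that $\inj(M)<\diam(M)$, and let $p\in M$ realize $\inj(p)=\inj(M)$. By standard cut-locus theory on a surface, $p$ has a cut point $q_0$ at distance $\inj(p)<\diam(M)$ which is either a first conjugate point along some geodesic from $p$ or the common endpoint of two distinct minimizing geodesics. In each case the exponential map from $p$ folds near $q_0$, so for $q'$ in a small neighborhood of $q_0$ the set $L_M(p,q')$ contains several short light rays, augmented by further rays coming from the global geometry (since $d(p,q')<\diam(M)$ and $M$ is not a single geodesic arc). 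Cross blocking provides two blockers $z_1(q'),z_2(q')$ for each such $q'$; by deforming $q'$ along a small transverse arc through $q_0$ and invoking continuity, I would extract a one-parameter family of pairs $(p,q')$ whose totality of light rays cannot be absorbed by any continuous two-point selection, forcing a contradiction.

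With $M$ established to be Blaschke, Green's classification of Blaschke surfaces immediately gives that $M$ is isometric to a constant curvature sphere or projective plane. The main obstacle is the Blaschke step: with only cross blocking and without the additional sphere blocking hypothesis of Theorem \ref{thm:sphere}, one must perform a delicate local analysis of the competing light rays near the cut locus and combine it with the preceding topological reduction to rule out the failure of $\inj(M)$ to equal $\diam(M)$.
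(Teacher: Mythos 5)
Your overall architecture --- reduce to the Blaschke condition and then quote Green's theorem --- is not the paper's route, and the step on which it hinges is exactly the one you have not proved. The paper never shows directly that cross blocking implies $\inj(M)=\diam(M)$. Instead it argues contrapositively: if $M$ does not have constant curvature then it is not Blaschke (Berger/Green), and then Proposition \ref{blocking-blaschke} extracts from the failure of the Blaschke condition only a much weaker conclusion, namely a single simple closed geodesic $\gamma$ of length $2\inj(M)<2\diam(M)$. Your ``main step'' compresses all of the remaining difficulty into the sentence about deforming $q'$ through the cut point and ``invoking continuity'' to defeat ``any continuous two-point selection.'' This is not an argument: blockers furnished by cross blocking need not vary continuously in $q'$, nothing forces a continuous selection to exist or to be relevant, and near a cut point the set $L_M(p,q')$ may consist of exactly two short rays, which two points block with room to spare. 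The actual work in the paper at this stage is a Morse-theoretic third-geodesic argument (Lemma \ref{third-geodesic}) producing a third light ray with interior disjoint from the first two, which is what genuinely contradicts $b_M(p,q)\le 2$; and even that only yields the closed geodesic $\gamma$, not the Blaschke property. From $\gamma$ the paper still needs: a minimizing-in-homotopy-class argument to show $\gamma$ generates $\pi_1(M)$ (so $M$ is $S^2$ or $\BR\BP^2$), the Hass--Scott orthogonal chord to kill the $S^2$ case, and, in the $\BR\BP^2$ case, Warner's structure theory for the conjugate locus plus Bangert's theorem on geodesic chords of constant length to show a hemisphere is round. None of this is replaceable by the continuity heuristic you describe.

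Your preliminary topological reduction is also not sound as written. For a flat torus the lifts of two candidate blockers grow at the same polynomial rate as the lifts of $q$, so a na\"ive count of directions from $\wt p$ does not produce a contradiction; the known proofs that tori need more than two blockers and that higher-genus surfaces are insecure (Burns--Gutkin, Gutkin--Schroeder) use genuinely different ideas (midpoint coset structure, entropy). The paper sidesteps this entirely: it restricts the topology only after producing $\gamma$, by taking an energy-minimizing geodesic in a homotopy class of arcs not carried by $\gamma$ and exhibiting three pairwise interior-disjoint light rays. I would encourage you to keep the Green's-theorem endgame in mind --- it is where the paper also lands, via \cite{Ba} --- but the bridge from cross blocking to that endgame is the content of the theorem, and your proposal does not supply it.
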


Section \ref{sec:preli} contains some preliminary material concerning Morse theory for path spaces and properties of totally convex subsets in Riemannian manifolds.  In section \ref{sec:light} we prove Theorems \ref{thm:sphere} and \ref{thm:surfaces}.
\vspace{0.2cm}

\textbf{Acknowledgements}  The first author was partially funded by an NSF Postdoctoral Fellowship during the period this work was completed.  He thanks the FIM Institute for Mathematical Research for its hospitality during the earlier stages of writing.  The second author would like to thank the Department of Mathematics of Stanford University for its hospitality while most of this paper was being written.

\section{Preliminaries}\label{sec:preli}
In order to fix notation we start reviewing some well known definitions and results in differential geometry.  We then review the basic aspects about Morse theory on path spaces and about totally convex subsets in Riemannian manifolds needed in Section 2.  Good references for this material include Milnor's \textit{Morse Theory} \cite{Mi} and Cheeger and Ebin's \textit{Comparison Theorems in Riemannian Geometry} \cite{ChEb}.  

\subsection{Basic definitions and notation}
Let $M$ be a closed manifold with a Riemannian metric $\langle\cdot,\cdot\rangle$ and induced norm $\Vert\cdot\Vert$. The length and energy of a piecewise smooth curve $\gamma:[0,T]\to M$ are given by 
\begin{equation} \begin{array}{l} 
\displaystyle L_M(\gamma)=\int\Vert\dot\gamma(t)\Vert dt \\
\displaystyle E_M(\gamma)=\int\Vert\dot\gamma(t)\Vert^2dt.
\end{array}
\end{equation}
The Cauchy-Schwartz inequality implies that $L(\gamma)^2\leq T\cdot E(\gamma)$ with equality if and only if $\gamma$ has constant speed $\Vert\dot\gamma\Vert$. A curve with constant speed $1$ is said to be parametrized by arc-length. The distance $d_M(x,z)$ between two points in $M$ is the infimum of the lengths of curves joining them and the diameter $\diam(M)$ is the maximal distance between points in $M$. A parametrized curve $\gamma:(0,T)\to M$ is a geodesic if it is locally distance minimizing. Equivalently, $\gamma$ fulfills the geodesic differential equation; hence, geodesics are smooth. We will often say that the image of a geodesic is a geodesic as well. Geodesics will usually be denoted by Greek letters $\gamma,\eta, \tau \dots$. A variation of geodesics is a smooth map $(s,t)\to\gamma_s(t)$ where $\gamma_s$ is a geodesic for all $s$. The vectorfield $\frac\D{\D s}\gamma_s(t)$ along the curve $\gamma_0$ is said to be a Jacobi field. A vector field along a geodesic is a Jacobi field if and only if it satisfies the so called Jacobi equation, a second order ordinary differential equation. In particular, the space of Jacobi fields along a geodesic is a finite dimensional vector space and every Jacobi field $J$ is determined by its initial value and derivative. Two points $x$ and $y$ in $M$ are conjugate along a geodesic arc $\gamma$ joining them if there is a nonzero Jacobi field along $\gamma$ vanishing at $x$ and $y$. 

By the Hopf-Rinow theorem, any two points in $M$ are joined by a geodesic segment whose length realizes the distance between them. Moreover, for every point $p\in M$ and for every direction $v\in T_pM$ there is a geodesic $t\mapsto\exp_p(tv)$ starting at $p$ with direction $v$. Thus we obtain the so called exponential map
$$\exp_p:T_pM\to M$$
The exponential map is a local diffeomorphism in some small neighborhood of $0\in T_pM$. The injectivity radius $\inj_p(M)$ is the maximum of those $r>0$ such that exponential map is injective on the ball $B(0,r)=\{v\in T_pM\vert\ \Vert v\Vert<r\}$. The map $p\mapsto\inj_p(M)$ is continuous and hence attains a minimum, the injectivity radius $\inj(M)$ of the manifold.
\vspace{0.2cm}

For the sake of concreteness we will always assume that the manifolds in question have injectivity radius $\inj(M)\ge 2$ and will simply denote the length and energy functions by $L$ and $E$ instead of $L_M$ and $E_M$.

\subsection{The space of broken geodesics}
Given $k\in\BN$ let $\CL_k$ be the set of piecewise geodesic curves consisting of at most $k$ edges of at most length $1$. To be more precise, elements $\gamma\in\CL_k$ are continuous curves
$$\gamma:[0,k]\to M$$
such that for all $i=0,1,\dots,k-1$ the curve $\gamma\vert_{[i,i+1]}$ is a geodesic segment with length at most $1$. When we endow $\CL_k$ with the compact open topology, the valuation map
$$\CL_k\to M^{k+1},\ \ \gamma\mapsto(\gamma(0),\dots,\gamma(k+1))$$
is continuous. Moreover, the assumption that $\inj(M)\ge 2$ implies that this map is injective and hence a homeomorphism onto its image. The interior $\CL_k^\circ$ of $\CL_k$, as a subset of $M^{k+1}$, is the set of those elements $\gamma$ consisting of geodesic arcs of length stictly less than $1$. The tangent space $T_\gamma\CL_k^\circ$ at $\gamma\in\CL_k^\circ$ is naturally identified with the space of the continuous vectorfields $J$ along $\gamma$ such that $J\vert_{[i,i+1]}$ is Jacobi for all $i=0,1,\dots,k-1$. Observe that this identification of $T_\gamma\CL_k^\circ$ is consistent with the identification of $\CL_k^0$ with an open subset of $M^{k+1}$. In particular, the later point of view induces a Riemannian metric $\langle\langle\cdot,\cdot\rangle\rangle$ on $\CL_k^\circ$. 

Given two points $p,q\in M$ set
$$\CL_k(p,q)=\{\gamma\in\CL_k(p,q)\vert\gamma(0)=p,\gamma(k)=q\}$$
Obviously $\CL_k(p,q)$ is a closed subset of $\CL_k$ homeomorphic to a closed subset of $M^{k-1}$. Moreover, from the description above we obtain that the interior $\CL_k(p,q)^\circ$ of $\CL_k(p,q)$ as a subset of $M^{k-1}$ coincides with the intersection of $\CL_k(p,q)\cap\CL_k^\circ$. In particular, the tangent space $T_\gamma\CL_k(p,q)^\circ$ of $\CL_k(p,q)^\circ$ at some curve $\gamma$ is given by the space of continuous vectorfields $J$ along $\gamma$ which vanish at $0$ and $k$ and such that $J\vert_{[i,i+1]}$ is Jacobi for all $i=0,1,\dots,k-1$.  The energy function $E(\cdot)$ is smooth in $\CL_k^\circ$ and the first variation formula asserts that the derivative of $E\vert_{\CL_k(p,q)^\circ}$ at some point $\gamma$ is given by:
\begin{equation}\label{first-variation}
d(E\vert_{\CL_k(p,q)^\circ})_\gamma(\cdot)=2 \sum_{i=1}^{k-1}\langle -\Delta\gamma(i),\cdot\rangle
\end{equation}
where $\Delta\gamma(t)=\D^+\gamma(t)-\D^-\gamma(t)$ and $\D^+\gamma(t)$ and $\D^-\gamma(t)$ are the right and left derivatives at $t$. Let $\CX$ be the negative gradient of $E\vert_{\CL_k(p,q)^\circ}$, i.e.
$$d(E\vert_{\CL_k(\gamma(0),\gamma(k))^\circ})_\gamma(\cdot)=-\langle\langle\CX_\gamma,\cdot\rangle\rangle$$
and let $\phi$ be the associated negative gradient flow
\begin{equation}\label{flow}
\phi'(t)=\CX_{\phi(t)},\ \ \phi(0)=\gamma
\end{equation} 
Observe that since the vector-field $\Delta\gamma(t)$ is smooth not only on $\CL_k(p,q)^\circ$ but on the the whole space $\CL_k^\circ$, the vector field $\CX$ and the flow $(\phi_t)$ are also smooth when considered on the whole of $\CL_k^\circ$. 

In general, gradient lines aren't defined for all $t\in\BR$ but just for some open sub-interval. However we claim that the flow $\phi$ is defined for all non-negative $t$. In fact, consider the function 
$$\lambda:\CL_k(p,q)\to[0,1],\ \ \lambda(\gamma)=\hbox{length of the longest segment in}\ \gamma$$
It is easy to check that
$$\lim_{t\to 0,\ t>0}\frac{\lambda(\phi_{\gamma}(t))-\lambda(\gamma)}t\le 0.$$
This implies that $\lambda$ is non-increasing and hence that flow lines never come close to the boundary in positive times since $\CL_k(p,q)^\circ=\{\lambda<1\}$. Thus, we have:

\begin{lem}[$\CL_k^\circ$ is a cage]\label{lem:Morse}
There is a semi-flow 
$$\phi:\CL_k^\circ\times[0,\infty)\to\CL_k^\circ,$$ $$(\gamma,t)\mapsto \phi_{\gamma}(t)$$
such that $\phi_\gamma(0)=\gamma$ and $\frac d{dt}\phi_\gamma(t)=\CX_{\phi_\gamma(t)}$ for all $\gamma$ and $t$. Moreover, the semi-flow preserves $\CL_k(p,q)^\circ$ for all $p,q\in M$.\qed
\end{lem}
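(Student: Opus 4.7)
The plan is to build the flow in three stages: invoke standard ODE theory for local existence, use a monotonicity property of the function $\lambda$ to rule out finite-time escape from $\CL_k^\circ$, and inspect the formula defining $\CX$ to confirm that each $\CL_k(p,q)^\circ$ is preserved. Local existence and preservation come essentially for free: since $\CL_k^\circ$ sits as an open subset of the compact manifold $M^{k+1}$ and $\CX$ is smooth there, standard ODE theory yields a unique smooth maximal semi-flow $\phi_\gamma:[0,T_\gamma)\to\CL_k^\circ$ through each $\gamma$. Moreover, the first variation formula (\ref{first-variation}) only pairs variations against $\Delta\gamma(i)$ for $1\le i\le k-1$, so viewed as a vector field on $\CL_k^\circ\subset M^{k+1}$ the field $\CX$ has zero components at the first and last slots. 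Thus the flow fixes $\gamma(0)$ and $\gamma(k)$, and each $\CL_k(p,q)^\circ$ is automatically preserved.

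The heart of the argument is the Dini inequality $\limsup_{t\to 0^+}(\lambda(\phi_\gamma(t))-\lambda(\gamma))/t\le 0$ asserted in the excerpt. Writing $\ell_i(\gamma)=d_M(\gamma(i),\gamma(i+1))$ so that $\lambda=\max_i \ell_i$, I would compute, via the first variation of arc length, at $t=0$,
\[
\frac{d\ell_i}{dt}=\frac{1}{\ell_i}\bigl(\langle \D^-\gamma(i+1),\CX_{i+1}\rangle-\langle \D^+\gamma(i),\CX_i\rangle\bigr),
\]
where $\CX_j=2(\D^+\gamma(j)-\D^-\gamma(j))$ at interior $j$ and $\CX_j=0$ for $j\in\{0,k\}$. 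Using the speed identities $\|\D^+\gamma(j)\|=\ell_j$ and $\|\D^-\gamma(j)\|=\ell_{j-1}$, expansion yields, for interior $1\le i\le k-2$,
\[
\frac{d\ell_i}{dt}=\frac{2}{\ell_i}\bigl(\langle \D^-\gamma(i+1),\D^+\gamma(i+1)\rangle+\langle \D^+\gamma(i),\D^-\gamma(i)\rangle-2\ell_i^2\bigr),
\]
with the analogous expression (one inner product term and one copy of $\ell_i^2$ removed) when $i\in\{0,k-1\}$. If $\ell_i=\lambda(\gamma)$, then $\ell_{i\pm 1}\le\ell_i$, and Cauchy--Schwarz bounds each surviving inner product by $\ell_i^2$, forcing $d\ell_i/dt\le 0$. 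Since the upper Dini derivative of the maximum of finitely many smooth functions is bounded above by the maximum of their derivatives over the indices realizing the maximum, this gives $D^+(\lambda\circ\phi_\gamma)(0)\le 0$, and $\lambda\circ\phi_\gamma$ is non-increasing.

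Given this monotonicity, every flow line starting at $\gamma_0$ with $\lambda(\gamma_0)=c<1$ remains inside the set $K=\{\eta\in\CL_k : \lambda(\eta)\le c\}$, which is closed in the compact space $M^{k+1}$ and contained in $\CL_k^\circ=\{\lambda<1\}$; on an open neighborhood of this compactum $\CX$ is smooth, so standard ODE continuation rules out finite-time blowup and yields $T_{\gamma_0}=\infty$. The principal obstacle I foresee is the derivative computation above: it requires careful bookkeeping of the separate ``interior'' and ``endpoint'' cases (in particular for the segments adjacent to the fixed endpoints $p$ and $q$, where one of the $\CX_j$ vanishes), and since $\lambda$ fails to be smooth precisely at configurations where several segments simultaneously realize the maximum, one must work with upper Dini derivatives throughout rather than ordinary ones.
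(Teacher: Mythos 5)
Your proposal is correct and follows essentially the same route as the paper, whose proof is the paragraph preceding the lemma: define $\lambda$, observe (without computation) that its upper Dini derivative along flow lines is nonpositive, and conclude that flow lines stay in $\{\lambda\le c\}\subset\CL_k(p,q)^\circ$ so the semi-flow exists for all positive time. Your first-variation computation simply fills in the step the paper dismisses as ``easy to check,'' and does so correctly.
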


We now consider the restriction of the energy function $E$ to $\CL_k(p,q)^\circ$ for some pair of points $p,q\in M$. In order to relax notation we write $E$ instead of $E\vert_{\CL_k(p,q)^\circ}$.  It follows directly from the first variation formula \eqref{first-variation} that the critical points of $E$ are precisely the geodesics of length less than $k$ joining $p$ and $q$.  

\begin{lem}[Third geodesic]\label{third-geodesic}
Assume that $\gamma_0,\gamma_1 \in \CL_k(p,q)$ are minimizing geodesics joined by a continuous curve $\gamma:[0,1] \to \CL_k(p,q)$, $s \mapsto \gamma_s$. Then there is a third geodesic $\alpha\in\CL_k(p,q)$ joining $p$ to $q$ with $E(\alpha) \leq\max_{s\in[0,1]}E(\gamma_s)$.
\end{lem}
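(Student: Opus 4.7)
\emph{Proof plan.} This is a mountain-pass argument on the path space $\CL_k(p,q)$, driven by the negative gradient semi-flow $\phi$ furnished by Lemma \ref{lem:Morse}; the first variation formula \eqref{first-variation} identifies the critical points of $E|_{\CL_k(p,q)^\circ}$ with the unbroken geodesics from $p$ to $q$.

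Since $\gamma_0$ and $\gamma_1$ are minimizing, each of their $k$ subsegments has length $d(p,q)/k$, giving $E(\gamma_0) = E(\gamma_1) = d(p,q)^2/k =: E_{\min}$; by Cauchy--Schwarz this is the global minimum of $E$ on $\CL_k(p,q)$. Set $c := \max_s E(\gamma_s) \geq E_{\min}$. If $c = E_{\min}$ then every $\gamma_s$ is itself a minimizing geodesic, and continuity of $s \mapsto \gamma_s$ combined with $\gamma_0 \neq \gamma_1$ produces a third minimizing geodesic $\alpha = \gamma_{s_0}$. So assume $c > E_{\min}$ and, for contradiction, that the only geodesics in $\CL_k(p,q)$ with energy $\leq c$ are $\gamma_0$ and $\gamma_1$. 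After subdividing the parametrizations I may further assume that the entire path $\gamma_s$ lies in the interior $\CL_k(p,q)^\circ$; monotonicity of $\lambda$ along flow lines then keeps the forward orbit of the path inside $\{\lambda \leq \lambda_0\}$ for some $\lambda_0 < 1$.

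Fix disjoint open neighborhoods $U_0 \ni \gamma_0$ and $U_1 \ni \gamma_1$ in $\CL_k(p,q)$. The set $V := \{\lambda \leq \lambda_0\} \cap \{E \leq c\} \setminus (U_0 \cup U_1)$ is a compact subset of $\CL_k(p,q)^\circ$ on which, by our contradiction hypothesis, the vector field $\CX$ has no zero; hence $\|\CX\| \geq c_0 > 0$ on $V$. Because $\{\gamma_0, \gamma_1\}$ is the entire $E_{\min}$-level set, a compactness argument lets me choose $\eta > E_{\min}$ with $\{E \leq \eta\} \cap \{\lambda \leq \lambda_0\} \subset U_0 \cup U_1$. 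The pointwise estimate
\begin{equation*}
\tfrac{d}{dt} E\bigl(\phi_t(\gamma_s)\bigr) = -\bigl\|\CX_{\phi_t(\gamma_s)}\bigr\|^2 \leq -c_0^2 \qquad \text{while } \phi_t(\gamma_s) \in V
\end{equation*}
then forces every flow line to leave $V$, i.e.\ to enter $U_0 \cup U_1$, by the uniform time $T := (c - \eta)/c_0^2$; since $E$ is non-increasing along $\phi$, the inequality $E(\phi_T(\gamma_s)) \leq \eta$ persists, and hence $\phi_T(\gamma_s) \in U_0 \sqcup U_1$ for every $s \in [0,1]$. But then $s \mapsto \phi_T(\gamma_s)$ is a continuous path from $\gamma_0 \in U_0$ to $\gamma_1 \in U_1$ inside the disjoint union $U_0 \sqcup U_1$, contradicting connectedness of $[0,1]$.

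\emph{Main obstacle.} The delicate step is uniformity of the descent in $s$, which hinges on the compactness bound $\|\CX\| \geq c_0 > 0$ away from the critical points $\gamma_0, \gamma_1$. This in turn requires keeping the flow away from the boundary $\{\lambda = 1\}$ where $\CX$ is not defined; the monotonicity of $\lambda$ along flow lines, recorded just before Lemma \ref{lem:Morse}, is exactly what makes this compactness argument go through.
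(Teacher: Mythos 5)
Your proof follows essentially the same route as the paper's: assume for contradiction that $\gamma_0,\gamma_1$ are the only geodesics with energy at most $C=\max_s E(\gamma_s)$, run the negative gradient semi-flow of Lemma \ref{lem:Morse}, show every flow line reaches a neighborhood of $\{\gamma_0,\gamma_1\}$ by a uniform time, and contradict the connectedness of $[0,1]$ --- indeed you supply the compactness bound $\|\CX\|\geq c_0$ that the paper leaves implicit when it simply asserts the existence of the uniform time $t_0$. The one loose thread is the claim that $E(\phi_T(\gamma_s))\leq\eta$ ``persists'': with your ordering of choices a flow line could enter $U_0\cup U_1$ while its energy is still above $\eta$ and then wander back into $V$, so you should instead take $U_0\sqcup U_1$ to be the disjoint open pieces of the forward-invariant set $\{E<\eta\}\cap\{\lambda<\lambda_0\}$ (invariant because both $E$ and $\lambda$ are non-increasing along the flow), after which a flow line that has entered one piece can never reach the other and the connectedness contradiction goes through --- a repair of the same order as the uniformity assertion the paper itself elides.
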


\begin{proof}
Let $c=E(\gamma_0)=E(\gamma_1)$, $C=\max_{s\in[0,1]}E(\gamma_s)$, and assume that $\gamma_0,\gamma_1\in\CL_k(p,q)$ are the only geodesic segments joining $p$ and $q$ with energy not more than $C$.  Then for each $s\in (0,1),$ $c<E(\gamma_s)\leq C$ and by Lemma \ref{lem:Morse} and the paragraph following that lemma, $\phi_{\gamma_s}(t)$ converges to either $\gamma_0$ or $\gamma_1$ as $t \rightarrow \infty$.  Choose $t_0>0$ so that for all $s\in [0,1]$, $d(\phi_{\gamma_s}(t_0),\{\gamma_0,\gamma_1\})<d(\gamma_0,\gamma_1)/3:=d_0$.  The assumption that both $\gamma_0$ and $\gamma_1$ are minimizing implies that for $s>0$ sufficiently close to zero (resp. close to 1), $d(\phi_{\gamma_s}(t_0),\gamma_0)<d_0$ (resp. $d(\phi_{\gamma_s}(t_0),\gamma_1)<d_0$.)  Finally, for $i=0,1$, define $S_i \subset [0,1]$ by $S_i=\{s\in (0,1)\,\vert\, d(\phi_{\gamma_s}(t_0),\gamma_i)<d_0$ \}.  Then $(0,1)$ is the disjoint union of the two nonempty open sets $S_0$ and $S_1$, a contradiction.  
\end{proof}

\subsection{Totally Convex Subsets}

\begin{defi*}
A set $C$ in a complete Riemannian manifold $M$ is called totally convex if whenever $p,q \in C$ and $\eta$ is a geodesic segment from $p$ to $q$, then $\eta \subset C$.
\end{defi*}

A closed totally convex set $C\subset M$ has the structure of an embedded topological submanifold with smooth interior and possibly nonempty and nonsmooth boundary (see e.g. \cite[Chapter 8]{ChEb}).  The next result is Theorem 8.14 in \cite{ChEb}.

\begin{sat}\label{totconvex}
Let $C$ be a compact boundaryless totally convex set $C$ in $M$.  Then the inclusion $C \subset M$ is a homotopy equivalence.
\end{sat}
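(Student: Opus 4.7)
The plan is to construct a strong deformation retraction of $M$ onto $C$, from which the homotopy equivalence follows. I would proceed in four steps.

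First, I would invoke the structure theorem for closed totally convex sets (Cheeger--Gromoll): since our compact $C$ has empty boundary, it is in fact a smooth closed embedded submanifold of $M$. Consequently $C$ admits a tubular neighborhood $U_\epsilon=\{x\in M\,:\,d(x,C)<\epsilon\}$ for some small $\epsilon>0$, on which the nearest-point projection $\pi:U_\epsilon\to C$ is well-defined and smooth, and radial retraction along unit-speed geodesics normal to $C$ yields a strong deformation retraction of $U_\epsilon$ onto $C$.

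Second, on the complement of the tubular neighborhood I would use Grove--Shiohama style Morse theory for the distance function $f(x)=d(x,C)$. Recall the convention: a point $x\in M\setminus C$ is \emph{regular} for $f$ if there exists $v\in T_xM$ with $\langle\dot\gamma(0),v\rangle>0$ for every unit-speed minimal geodesic $\gamma:[0,f(x)]\to M$ from $x$ to $C$, and \emph{critical} otherwise; equivalently, $x$ is critical exactly when $0$ lies in the convex hull, inside $T_xM$, of the initial velocities of minimal geodesics from $x$ to $C$. My central claim is that \emph{no point of $M\setminus C$ is critical for $f$.}

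Third, to prove this claim I would suppose for contradiction that $x\in M\setminus C$ is critical, so that finitely many minimal unit-speed geodesics $\gamma_1,\ldots,\gamma_k:[0,L]\to M$ from $x$ to points $y_1,\ldots,y_k\in C$ have initial velocities satisfying $\sum_i\lambda_i\dot\gamma_i(0)=0$ for some positive $\lambda_i$. Boundarylessness and smoothness of $C$ force $\dot\gamma_i(L)\perp T_{y_i}C$ for each $i$: otherwise a first-variation argument along a curve in $C$ through $y_i$ would yield a shorter path from $x$ to $C$. Total convexity then guarantees that the minimal geodesic $\eta$ joining any two $y_i,y_j$ is entirely contained in $C$. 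Combining these facts with first- and second-variation analysis along $\eta$, using the Jacobi field arising from the family of minimizing geodesics from $x$ to points of $\eta$, I would produce a point $z\in C$ with $d(x,z)<f(x)$, contradicting the definition of $f(x)$.

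Once the absence of critical points is established, the final step is a standard construction in the Morse theory of distance functions: partition-of-unity arguments on $M\setminus C$ produce a locally Lipschitz vector field along which $f$ strictly decreases, and its flow yields a deformation retraction of $M\setminus U_{\epsilon/2}$ onto $\partial U_{\epsilon/2}$, which concatenates with the tubular neighborhood retraction to give the desired deformation retraction $M\simeq C$. I expect the principal obstacle to be the step producing $z\in C$ with $d(x,z)<f(x)$: in positive curvature the squared distance function need not be convex along geodesics, so the naive midpoint choice $z=\eta(L/2)$ may fail. Executing the variational argument rigorously, and marshaling total convexity and perpendicularity to compensate for the absence of a curvature assumption, is the delicate part of the proof.
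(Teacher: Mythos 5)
Your strategy---Grove--Shiohama critical point theory for the distance function $f=d(\cdot,C)$---is genuinely different from the paper's, which instead applies the negative gradient flow of the \emph{energy} functional on the space $\mathcal{L}_C$ of paths with endpoints in $C$ and observes that total convexity leaves only the constant curves as critical points, so that $\pi_i(M,C)=0$. Unfortunately your version has a real gap at exactly the step you flag as delicate, and I do not believe the variational argument you sketch can close it. First, notice that for a closed manifold $M$ your central claim is not an intermediate step but the whole theorem in its strongest form: if $C\neq M$, then $f$ attains a positive maximum at some $x\in M\setminus C$, and a local maximum of a distance function is always a critical point in the Grove--Shiohama sense (if $v$ made acute angles with all initial directions of minimal geodesics from $x$ to $C$, moving from $x$ opposite to $v$ would strictly increase $f$ by first variation). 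So ``no critical points in $M\setminus C$'' is \emph{equivalent} to $C=M$, and any proof of it must see the global topology of $M$; a local analysis near the feet cannot suffice.

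Concretely, the only consequences of total convexity that Step 3 actually invokes are (i) the minimal geodesics from $x$ meet $C$ perpendicularly at the feet $y_i$, and (ii) the \emph{minimizing} geodesic $\eta$ between two feet lies in $C$. Both hold for the round sphere $S^2$ with $C$ the equator and $x$ the north pole: $x$ is critical, every meridian hits the equator orthogonally, and the minimal geodesic between two nearby feet is an equatorial arc contained in $C$; yet $d(x,z)=\pi/2$ for every $z\in C$, so no point $z$ with $d(x,z)<f(x)$ exists. (The equator is of course not totally convex, but your argument never uses more than (i) and (ii), so it would ``prove'' a false statement in this model.) The feature of total convexity you must exploit is that \emph{all} geodesic segments between points of $C$---long, non-minimizing ones and geodesic loops included---stay in $C$; first and second variation along the single minimizing chord $\eta$ cannot detect this, whereas the paper's path-space Morse theory uses it in full: a nonconstant geodesic critical for $E$ on $\mathcal{L}_C$ meets $C$ orthogonally at both ends, but total convexity forces it into, hence tangent to, $C$, a contradiction. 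Your Steps 1, 2 and 4 are fine as stated, but without a correct proof of the central claim the argument does not go through.
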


The idea behind the proof is to apply the negative gradient flow of the energy functional on the space $\mathcal{L}_C$ consisting of curves in $M$ with endpoints in $C$. As $C$ is totally convex, the only critical points are the constant curves into $C$.  It follows that $C\subset \mathcal{L}_C$ is a deformation retract, proving that the relative homotopy groups $\pi_i(M,C)$ vanish.  The next corollary is an easy consequence of Theorem \ref{totconvex}.

\begin{kor}\label{chord}
Assume that $\gamma \subset M$ is a closed geodesic in a closed Riemannian manifold $M$ of dimension at least two.  Then there exists a geodesic segment $\eta : [0,1] \rightarrow M$ with endpoints in $\gamma$ but not completely contained in $\gamma$.
\end{kor}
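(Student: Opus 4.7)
The plan is to argue by contradiction and apply Theorem \ref{totconvex} directly. Suppose, contrary to the conclusion, that every geodesic segment $\eta:[0,1]\to M$ with endpoints in $\gamma$ is entirely contained in $\gamma$. Since $\gamma$ is a closed geodesic, it is a compact submanifold of $M$ without boundary (diffeomorphic to $S^{1}$), and by our assumption it is totally convex.

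Next I would invoke Theorem \ref{totconvex}: the inclusion $\gamma\hookrightarrow M$ is a homotopy equivalence. Consequently the two spaces have the same homology, and in particular
\[
H_{k}(M;\mathbb Z/2)\;\cong\;H_{k}(\gamma;\mathbb Z/2)\;=\;0\qquad\text{for all }k\ge 2.
\]
On the other hand, $M$ is a closed (compact, boundaryless) manifold of dimension $n=\dim M\ge 2$, so Poincar\'e duality with $\mathbb Z/2$-coefficients gives $H_{n}(M;\mathbb Z/2)\cong\mathbb Z/2\neq 0$. This contradicts the previous display.

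The only step requiring care is the verification that $\gamma$, viewed as a subset of $M$, qualifies as the kind of set to which Theorem \ref{totconvex} applies: it must be compact, totally convex, and have empty boundary in the sense of the cited reference. Compactness is immediate, total convexity is exactly our standing assumption, and the boundary is empty because $\gamma$ is a smooth embedded closed curve (hence a topological submanifold without boundary). Once these points are in place, the homology comparison above completes the contradiction, so the initial assumption fails and the desired geodesic segment $\eta$ with endpoints in $\gamma$ but not contained in $\gamma$ must exist.
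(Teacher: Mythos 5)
Your proposal is correct and is essentially identical to the paper's own argument: assume total convexity for contradiction, apply Theorem \ref{totconvex} to get a homotopy equivalence $\gamma\simeq M$, and derive a contradiction from the nonvanishing of $H_{\dim M}(M;\mathbb Z/2\mathbb Z)$. The only cosmetic caveat is that a closed geodesic need not be simple, so its image need not be diffeomorphic to $S^1$; but it is still a compact one-dimensional set, so the vanishing of its homology in degrees $\ge 2$ (which is all the argument uses) still holds.
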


\begin{proof}
If not, then $\gamma \subset M$ is a totally convex subset and hence by Theorem \ref{totconvex}, $M$ is homotopy equivalent to $\gamma$.  This is a contradiction since the fundamental class $[M] \in H_{\dim(M)}(M,\mathbb{Z}/2\mathbb{Z})$ is a nonzero element. 
\end{proof}

\section{Main Theorems}\label{sec:light}
In this section we prove Theorems \ref{thm:sphere} and \ref{thm:surfaces}. The bulk of the work lies in proving the following technical result.

\begin{prop}\label{blocking-blaschke}
Suppose that $M$ is a closed Riemannian manifold with cross blocking. If $M$ is not a Blaschke manifold, then there is simple closed geodesic $\gamma \subset M$ of length $2\inj(M)$. 
\end{prop}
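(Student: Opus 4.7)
The plan is to construct the simple closed geodesic as the concatenation of two minimizing geodesics between carefully chosen endpoints $p$ and $q$. Let $\ell = \inj(M)$. Since $M$ is not Blaschke, $\ell < \diam(M)$. By continuity of $x \mapsto \inj_x(M)$ and compactness of $M$, pick $p \in M$ realizing $\inj_p(M) = \ell$, and let $q$ be a closest point in the cut locus of $p$, so that $d(p,q) = \ell$. By symmetry of the cut locus, $p$ lies in the cut locus of $q$; combined with $\inj_q(M) \geq \inj(M) = \ell$ this forces $\inj_q(M) = \ell$. The inequality $d(p,q) = \ell < \diam(M)$ is precisely what lets us invoke cross blocking at $(p,q)$, yielding at most two points that shade $L_M(p,q)$.

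The main step is to exhibit two distinct minimizing geodesics $\sigma_1, \sigma_2 \colon [0,\ell] \to M$ from $p$ to $q$, along neither of which $q$ is conjugate to $p$. Since $q$ is a cut point of $p$, either (a) two distinct minimizing geodesics exist, or (b) the unique minimizer $\sigma$ has $q$ conjugate to $p$ along it; moreover, even in case (a) one of the $\sigma_i$ might, a priori, carry $q$ as a conjugate point at the endpoint. Ruling out conjugacy in all of these cases using cross blocking is what I expect to be the main obstacle. The natural approach is to use the Jacobi field witnessing a conjugacy to manufacture a family of broken geodesic paths in $\CL_k(p,q)^\circ$ of energy close to $\ell^2$, and then feed this family into the negative gradient semi-flow of Lemma \ref{lem:Morse} while iterating the Third Geodesic Lemma \ref{third-geodesic}; this should produce a plethora of distinct geodesics from $p$ to $q$ whose arrangement is incompatible with being shaded by only two points, since each minimizing piece lying inside the injective ball $B(p,\ell)$ is determined by its endpoint.

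Granted two non-conjugate minimizers $\sigma_1, \sigma_2$, set $\gamma = \sigma_1 \cdot \sigma_2^{-1}$, a closed curve of length $2\ell$. For smoothness at $q$, suppose $w := \dot\sigma_1(\ell) + \dot\sigma_2(\ell) \neq 0$ and put $q_t := \exp_q(-tw/\|w\|)$ for small $t > 0$. Since $q$ is not conjugate to $p$ along either $\sigma_i$, the inverse function theorem for $\exp_p$ together with the first variation formula produces geodesics $\sigma_i^t$ near $\sigma_i$ from $p$ to $q_t$ with $L(\sigma_i^t) < \ell$. As $d(p,q_t) < \ell = \inj_p(M)$, the point $q_t$ lies in the embedded ball $B(p,\ell) = \exp_p(B(0,\ell))$, so $\exp_p$ has a unique preimage of $q_t$ there; hence $\sigma_1^t = \sigma_2^t$, contradicting $\sigma_1 \neq \sigma_2$. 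A symmetric argument at $p$, using $\inj_q(M) = \ell$, gives smoothness at $p$. For simplicity, an interior intersection $\sigma_1(s) = \sigma_2(s')$ forces $s = s' < \ell$ (both restrictions being minimizing from $p$ to that point), which yields two distinct minimizing geodesics of length $s$ inside $B(p,\ell)$, contradicting injectivity of $\exp_p$. Hence $\gamma$ is the desired simple closed geodesic of length $2\ell$.
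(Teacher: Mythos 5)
Your reduction of the problem is reasonable and your last two steps are correct as conditional statements: granted two distinct minimizing geodesics from $p$ to $q$ along neither of which $q$ is conjugate to $p$, the Klingenberg-type perturbation argument does force them to close up smoothly, and the simplicity argument via injectivity of $\exp_p$ and $\exp_q$ is fine. But the proof has a genuine gap exactly where you say you expect "the main obstacle" to be: you never establish the existence of two distinct minimizers, nor the non-conjugacy of $q$ to $p$ along them, and the sketch you offer does not work as stated. In the case where $p$ and $q$ are joined by a \emph{unique} minimizer along which they are conjugate, Lemma \ref{third-geodesic} is not even applicable --- it requires a path in $\CL_k(p,q)$ whose two endpoints are both minimizing geodesics, and you only have one. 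Moreover, a Jacobi field vanishing at both endpoints produces a variation fixing the endpoints only to first order; turning it into an actual family in $\CL_k(p,q)^\circ$ with energy close to $\ell^2$ is not automatic (at the first conjugate point the index form is merely positive semi-definite, so you do not get competitors of smaller energy for free). Finally, even if one produces a ``plethora'' of geodesics from $p$ to $q$, violating cross blocking requires three light rays with pairwise disjoint interiors; this needs quantitative length control (all competitors of length well below $2\inj(M)$ plus the cut-and-paste estimate), which your sketch does not supply for the non-minimizing geodesics the lemma would output.

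It is worth noting that the paper's proof never confronts the conjugacy dichotomy at all, which is a sign that your route may not be salvageable in the form proposed. Instead it (i) uses cross blocking together with Lemma \ref{third-geodesic} applied in $\CL_4(p,q)^\circ$ to show that $\exp_p$ is injective on $U\cap\cut(p)$ for a neighborhood $U$ of the closest cut vector $\theta$; (ii) extracts a second minimizing geodesic to $q=\exp_p(\theta)$ as a limit of minimizers to points $q_i\to q$ lying outside $\exp_p(i_p(V_\delta))$, which exist precisely because of that local injectivity; and (iii) proves the two minimizers close up smoothly not by perturbing non-conjugate geodesics but by a second cross-blocking argument: if the tangents failed to be opposite at $q$, an obtuse direction $v$ at $q$ and the first variation formula yield an explicit low-energy homotopy in $\CL_4(p,q)^\circ$ between $\gamma_0$ and $\gamma_1$, whence a third short geodesic and $b_M(p,q)\ge 3$. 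To complete your write-up you would either have to reproduce arguments of this kind to fill the main step, or find a genuinely new way to exclude the conjugate case; as it stands, the heart of the proposition is missing.
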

\begin{proof}
We assume that $M$ has been scaled so that $\inj(M)=2$. Choose $p \in M$ with $\inj_p(M)=\inj(M)$ and let $\cut(p)\subset T_pM$ be its cut-locus. Choose $\theta \in \cut(p)$ with $||\theta||=2$ realizing the injectivity radius. For $r>0$ and $v\in T_p M$ denote by $B(v,r) \subset T_p(M)$, the open ball with radius $r$ and center $v$. We first argue that there is an open neighborhood $U\subset T_p M$ of $\theta$ for which the restriction of $\exp_p:T_p M\to M$ to  $U \cap \cut(p)$ is one-to-one.

Indeed, if this were not the case, then the restriction of $\exp_p$ to $B(\theta,r) \cap \cut(p)$ is not one-to-one for each $r>0$.  Fix a positive $\epsilon'$ smaller than $\frac{1}{2}$.  By continuity of the exponential map and the distance function in $M$, there is a sufficiently small $r_0>0$ so that for all $\theta_0,\theta_1 \in B(\theta,r_0)$ we have that 
$$d_M(\exp_p(\frac{\theta_0}{2}), \exp_p(\frac{\theta_1}{2}))<\frac{\epsilon'}{2}$$  
Let $\epsilon<\min\{\epsilon',r_0, \diam(M)-2\}$ and choose $\theta_0,\theta_1 \in B(\theta,\epsilon)\cap \cut(p)$ with $\exp_p(\theta_0)=\exp_p(\theta_1):=q$.  Define $\gamma_i:[0,4] \to M$ by $\gamma_i(t):=\exp_p(t\frac{\theta_i}{4})$ for $i=0,1$.  Note that both $\gamma_0$ and $\gamma_1$ are minimizing geodesics between $p$ and $q$ with $L(\gamma_i)\leq 2+\epsilon$ for $i=0,1$. We consider the curve 
$$\sigma_p:[0,1] \to T_pM,\ \ \sigma_p(s)=(1-s)\frac{\theta_0}{2}+s\frac{\theta_1}{2}$$
in the tangent space to $M$ at $p$ and its image under the exponential map
$$\sigma:[0,1] \to M,\ \ \sigma(s)=\exp_p(\sigma_p(s)).$$
For each $s\in [0,1]$, we have that 
$$d_M(q,\sigma(s))\leq d_M(q,\sigma(0))+d_M(\sigma(0),\sigma(s)) \leq \frac{2+\epsilon}{2}+\frac{\epsilon'}{2} < 1+\epsilon'<2.$$
Therefore, there is a unique curve $\sigma_q:[0,1]\to B(0,2)\subset T_qM$ with $\exp_q(\sigma_q(s))=\sigma(s)$. For $s\in[0,1],$ define the one paramater family of curves $s \mapsto \gamma_s$ by 
$$\gamma_{s}(t) = \left\{ \begin{array}{rl}
         \exp_p(t\frac{\sigma_p(s)}{2}), & \mbox{ for } t \in [0,2]\\
       \exp_q((4-t)\frac{\sigma_q(s)}{2}), & \mbox{ for } t \in[2,4].\end{array} \right . $$ 

\begin{figure}[htbp]
\begin{center}
\includegraphics[scale=0.5,angle=270]{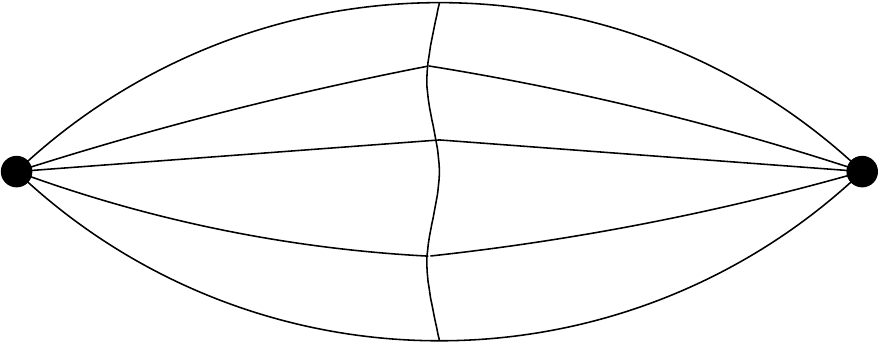}
\end{center}
\caption{The variation $\gamma_s$ interpolating by not much longer curves between the geodesics $\gamma_0$ and $\gamma_1$.}
\end{figure}

It is easy to check that $L(\gamma_s\vert_{[i,i+1]}) <1$ for all $s\in[0,1]$ and $i=0,\dots,3$ so that this family defines a continuous curve $\gamma:[0,1] \to \CL_4(p,q)^\circ$, $s \mapsto \gamma_s$, connecting $\gamma_0$ and $\gamma_1$.  One also checks easily that for each $s\in[0,1]$, the curve $\gamma_s$ has at most energy $(1+ \epsilon')^2$ so that by Lemma \ref{third-geodesic}, there is a third geodesic $\alpha\in \CL_4(p,q)^\circ$ joining $p$ to $q$ with $E(\alpha) \leq (1+ \epsilon')^2$.  It follows that $L(\alpha) \leq 2+ 2\epsilon' < 3$.  Note that since each of $\alpha, \gamma_0$, and $\gamma_1$ have length strictly less than 4, no two can intersect in their interiors without contradicting $\inj(M)=2$.  Hence, $b_M(p,q) \geq 3$, a contradiction to cross blocking since $d_M(p,q)\leq 2+\epsilon < \diam(M)$.

We have proved that there is some open neighborhood $U\subset T_p M$ of $\theta$ such that the restriction of $\exp_p$ to $U \cap \cut(p)$ is one-to-one. From now on, let $U$ be such a neighborhood. 

We argue next that there are at least two distinct unit speed minimizing geodesics 
$\gamma_0 ,\gamma_1:[0,2] \rightarrow M$ joining $p$ and $q:=\exp_p(\theta)$ (and hence exactly two by the cross blocking condition). Define 
\begin{align*}
&r_p:\partial \overline{B}(0,1) \to (0,\diam(M)]\\
&r_p(v)=\sup\{t\in(0,\diam(M)]\vert d_M(p,\exp_p(tv))=t\}
\end{align*}
It is well-known that the function $r_p$ is continuous. Hence, the function
$$i_p:\overline{B}(0,1)\setminus\{0\}\to T_p M,\ \ \ i_p(x)=r_p(\frac{x}{||x||})x$$ 
is continuous as well. Therefore, $i_p^{-1}(U)$ is an open subest of $\frac{\theta}{2}$ in $\overline{B}(0,1)$.  Choose $\delta>0$ sufficiently small so that the set $V_{\delta}:=\overline{B}(\frac{\theta}{2},\delta) \cap \overline{B}(0,1)$ is contained in $i_p^{-1}(U_{\theta})$.  Note that $V_{\delta}$ is homeomorphic to a basic closed set of $0$ in the upperhalf space $\mathbb{R}^n_{x_n \geq 0}$ and that the map $\exp_p\circ i_p$ is continuous and one-to-one on $V_{\delta}$.  Hence, $\exp_p(i_p(V_\delta))$ does not cover an entire neighborhood of $q$ so that we find a sequence of points $q_i \in M-\exp_p(i_p(V_{\delta}))$ converging to $q$.  For each, $i$, let 
$$\eta_i:[0,2] \to M$$ 
be a minimizing geodesic joining $p$ to $q_i$ and define $\gamma_0:[0,2] \to M$ by $\gamma_0(t)=\exp_p(t \frac{\theta}{2})$.  Up to passing to a subsequence the minimizing geodesics $\eta_i$ converge to a second unit speed geodesic $\gamma_1:[0,2] \to M$ joining $p$ to $q$.

Next we argue that $\gamma_0$ and $\gamma_1$ together form a closed geodesic. If not, then either $\dot{\gamma_0}(0) \neq  -\dot{\gamma_1}(0)$ or $\dot{\gamma_0}(2) \neq  -\dot{\gamma_1}(2)$.  We assume the latter, the former case being handled symmetrically.  Fix a positive $\epsilon<1$ and choose $v \in T^1_q M$ making obtuse angle with both $\dot{\gamma_0}(2)$ and $\dot{\gamma_1}(2)$. Note that for all sufficiently small $s$, the distance between the points $\gamma_i(2-\epsilon)$ and $sigma(s)=\exp_q(sv)$ is less than one and in particular they are connected by a unique minimizing geodesic segment $\sigma^i_s:[0,1] \to M$. By the first variation formula the energy $E(\sigma^i_s)$ is strictly decreasing for sufficiently small $s$. Fix $s_0<\epsilon$ positive and small enough such that $E(\sigma^i_{s_0})<E(\sigma^i_0)=\epsilon^2$.

For $i=0,1$ define broken geodesics $\alpha_i:[0,3] \to M$ by         
$$ \alpha_i(t) = \left\{ \begin{array}{rl}
         \gamma_i(\frac{2-\epsilon}{2}t), & \mbox{ for } t \in [0,2]\\
       \sigma^i_{s_0}(t-2) & \mbox{ for } t \in[2,3]\end{array} \right. $$ 
The curves $\alpha_0,\alpha_1$ belong to $\CL_3(p,\exp_q(s_{0}v))^\circ$ and have at most energy 
$$E(\alpha_i)\le \frac{(2-\epsilon)^2 +2\epsilon^2}{2}<2.$$
Since $d_M(p,\exp_q(s_{0}v))<2$ is less than the injectivity radius, the points $p$ and $\exp_p(s_{0}v)$ are connected by a unique geodesic segment $\alpha$ shorter than $2$. The uniqueness of $\alpha$ implies that the flow lines $\tau\mapsto\phi_{\alpha_i}(\tau)$ of the flow provided by Lemma \ref{lem:Morse} and starting in $\alpha_0$ and $\alpha_1$ respectively converge to $\alpha$ with $\tau\to\infty$.  We conclude that $\alpha_0$ and $\alpha_1$ are homotopic through piecewise geodesics with three segments having energy not more than $\frac{(2-\epsilon)^2 +2\epsilon^2}{2}.$ See figure 4.

\begin{figure}[htbp]
\begin{center}
\includegraphics[scale=0.5,angle=270]{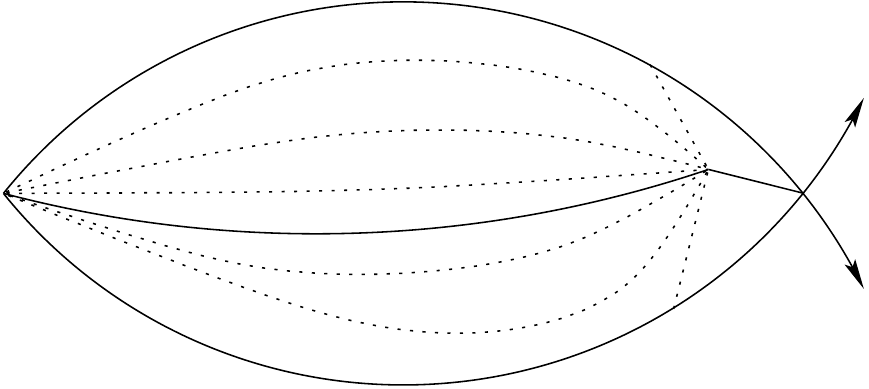}
\end{center}
\caption{Flowing $\alpha_0$ and $\alpha_1$ to the geodesic $\alpha$.}
\end{figure}

Similarly, the once broken geodesics joining $\gamma_i(2-\epsilon)$ to $q$ defined by concatenating $\sigma^{i}_{s_0}$ with $\sigma\vert_{[0,s_0]}$ traversed in the opposite direction are homotopic to $\gamma_i\vert_{[2-\epsilon,2]}$ through once broken geodesics of total energy not more than $2\epsilon^2$.  Combining these homotopies with those between $\alpha_0$ and $\alpha_1$ yields a continuous curve $\gamma:[0,1] \to \CL_4(p,q)^\circ$, $s \mapsto \gamma_s$, joining $\gamma_0$ and $\gamma_1$ with $\max_{s\in[0,1]} E(\gamma_s) \leq \frac{(2-\epsilon)^2+4\epsilon^2}{2}.$  By Lemma \ref{third-geodesic}, there is a third geodesic $\beta:[0,4]\to M$ joining $p$ to $q$ with $E(\beta)<\frac{(2-\epsilon)^2+4\epsilon^2}{2}$.  One easily checks that $L(\beta) <4$. Therefore, $\beta$ cannot intersect $\gamma_0$ or $\gamma_1$ in their interiors without contradicting $\inj(M)=2$.  Hence $b_M(p,q) \geq 3$, contradicting cross blocking since $d(p,q)=2<\diam(M)$.  

We obtain that $\dot{\gamma_0}(0)= -\dot{\gamma_1}(0)$ and $\dot{\gamma_0}(2)= -\dot{\gamma_1}(2)$, completing the proof of Proposition \ref{blocking-blaschke}.
\end{proof}

Next, we prove Theorems \ref{thm:sphere} and \ref{thm:surfaces}.
\begin{named}{Theorem \ref{thm:sphere}}
A closed Riemannian manifold $M$ has cross and sphere blocking if and only if $M$ is isometric to a round sphere.
\end{named}

\begin{proof}
We first scale the metric on $M$ so that $\inj(M)=2$.  To begin with we claim that $M$ is a Blaschke manifold. Otherwise there is simple closed geodesic $\gamma \subset M$ with 
\begin{equation}\label{eq:inj-diam}
L(\gamma)=2\inj(M)<2\diam(M)
\end{equation}
by Proposition \ref{blocking-blaschke}. By Corollary \ref{chord}, there is a geodesic segment $\eta:[0,1] \to M$ with end-points in $\gamma$ not entirely contained in $\gamma$. Up to replacing $\eta$ by a subsegment whose end-points are again in $\gamma$ we can assume that the interior of $\eta$ is disjoint from $\gamma$. Let $x$ and $y$ be the end-points of $\eta$. If $x=y$ then $\gamma$ and $\eta$ are two light rays from $x$ to itself with disjoint interior. Hence one needs at least two points to block $x$ from itself contradicting the assumption that $M$ has sphere blocking. Assume now that $x\neq y$. Then $\eta$ and the two subsegments of $\gamma$ connecting $x$ and $y$ are three light rays with disjoint interior. This implies that $x$ and $y$ have at least blocking number $b_M(x,y)\ge 3$. Since $M$ is assumed to have cross blocking we obtain that $x$ and $y$ are at distance $\diam(M)$ and hence $\gamma$ has at least length $2\diam(M)$ contradicting \eqref{eq:inj-diam}. 

We have proved that $M$ is Blaschke. As mentioned in the introduction, Theorem \ref{thm:sphere} follows now from \cite[Corollary 3.7]{LaSc} where it was shown that Blaschke manifolds with sphere blocking are isometric to round spheres. 
\end{proof}

\begin{named}{Theorem \ref{thm:surfaces}}
A closed Riemannian surface $M$ has cross blocking if and only if $M$ is isometric to a constant curvature sphere or projective plane. 
\end{named}

\begin{proof}
Assume that $M$ is a closed Riemannian surface with cross blocking that does not have constant positive curvature.  By \cite{Be}, $M$ is not Blaschke.  By Proposition \ref{blocking-blaschke}, there is a simple closed geodesic $\gamma \subset M$ of length $2\inj(M)<2\diam(M).$  

We first claim that $\gamma$ must generate $\pi_1(M)$.  To see this, fix $p \in  \gamma$ and an essential map $f:([0,1],\{0,1\})\rightarrow (M,p)$ representing an element in $\pi_1(M,p)$ not in the subgroup generated by $\gamma$.  Let $\epsilon>0$ be small, choose a point $q\in \gamma\cap B(p,\epsilon)$ different from $p$, and let $\gamma'$ denote the subsegment of $\gamma$ joining $p$ to $q$ of length less than $\epsilon$.  Concatenating $f$ with $\gamma'$ yields a map $f':[0,1]\rightarrow M$ with $f(0)=p$, $f(1)=q$, and with the property that any other curve joining $p$ to $q$ homotopic to $f'$ relative to the endpoints must have energy strictly greater than $\epsilon$.  A curve $\tau$ minimizing energy in this homotopy class is a geodesic segment joining $p$ to $q$ with image not entirely contained in $\gamma$.  Up to passing to a subsegment of $\tau$ with distinct endpoints $p'$ and $q'$, we may assume that the interior of $\tau$ never intersects $\gamma$ .  But then $\tau$, and the two subsegments of $\gamma$ joining $p'$ to $q'$ are three light rays with distinct interiors.  As $M$ is assumed to be cross blocked, $d(p',q')=\diam(M)$, a contadiction, completing the proof that $\gamma$ generates $\pi_1M$.  In particular $M$ is diffeomorphic to either $S^2$ or $\mathbb{R}\mathbb{P}^2$.

Next, assume that $M$ is diffeomorphic to $S^2$.  Then $\gamma$ bounds a Riemannian 2-disc $\mathbb{D}\subset S^2$.  By \cite{HaSc}, there is a geodesic segment $\tau:[0,1] \rightarrow \mathbb{D}$ making right angles at both ends with $\gamma$.  In particular, the endpoints of $\tau$ are distinct.  Hence, $\tau$ and the two subsegments of $\gamma$ joining $\tau(0)$ to $\tau(1)$ are three light rays between these points with distinct interiors.  Again, as $M$ is cross blocked, the distance between these endpoints is $\diam(M)$, a contradiction.

Thus, $M$ is diffeomorphic to $\mathbb{R}\mathbb{P}^2$.  Lift $\gamma$ to a closed geodesic $\tilde{\gamma} \subset S^2$.  Let $A:S^2 \rightarrow S^2$ be the order two covering transformation corresponding to $\gamma$ and for $p \in \tilde{\gamma}$, let $p'=A(p)$.  We shall say that such a pair of points $p,p' \in \gamma$ are an antipodal pair.  Note that the same reasoning as in the above two paragraphs shows that any geodesic segment $\tau:[0,1]\rightarrow S^2$ with endpoints in $\tilde{\gamma}$ and interior disjoint from $\tilde{\gamma}$ satisfies $\tau(0)=\tau(1)$ or $\tau(1)=\tau(0)'$.  It follows easily that $\tilde{\gamma}$ has no transversal self-intersections and that each pair of subsegments of $\tilde{\gamma}$ joining antipodal pairs are minimizing.  Fix a hemisphere $\Sigma$ bounded by $\tilde{\gamma}$.  We will next prove that $\Sigma$ is isometric to a constant curvature hemisphere, contradicting the assumption that $M$ does not have constant curvature, and completing the proof of the theorem.  

For $p \in \tilde{\gamma}$, let $T_p^+(\tilde{\gamma})\subset T_p S^2$ denote the set of unit tangent vectors based at $p$ either tangent to $\tilde{\gamma}$ or pointing into the hemisphere $\Sigma$.  

By \cite{HaSc}, there is a constant speed paramaterized geodesic segment $\tau:[0,1] \rightarrow \Sigma$ making right angles with $\tilde{\gamma}$ at both endpoints.  Let $p=\tau(0)$ and $L=\length{\tau}$.  Then by the above remarks, $\tau(1)=p'$.   Note that $p'$ is conjugate to $p$ along $\tau$ for otherwise there are geodesic segments arbitrarily close to $\tau$ joining $p$ to a point $p'' \in  \tilde{\gamma}$ distinct from but arbitrarily close to $p'$.  Let $\Conj(p)\subset T_p S^2$ denote the tangential conjugate locus to $p$ and $C$ the component containing $\dot{\tau}(0)$.  By work of Warner in \cite{Wa}, $C$ is a smooth $1$-submanifold of $T_pS^2$ transverse to the radial directions.

Let $v:=\frac{\dot{\tau}(0)}{|| \dot{\tau}(0) ||} \in  T_p^+(\tilde{\gamma})$ and note that for all vectors $v'$ sufficiently close to $v$ in $T_p^+(\tilde{\gamma})$, the geodesic ray $\tau_{v'}(t)=\exp_p(tv')$ crosses $\tilde{\gamma}$ in a small neighborhood of $p'$ at time close to $L$.  By the above remarks, the point of intersection of each such ray must be $p'$ and the antipodal pair $p$ and $p'$ are conjugate along each such ray.  Let $U \subset T_p^+(\tilde{\gamma})$ be the largest open interval around $v$ with the property that each ray in a direction through $U$ first leaves $\Sigma$ through the point $p'$.  Note that the times the rays in directions from $U$ leave $\Sigma$ through $p'$ vary smoothly with $v' \in U$.  This follows since $p$ and $p'$ are conjugate along each such ray and since $C$ is a smooth curve.  By the first variation formula, they actually all leave at time exactly $L$.  It follows that $U$ is closed and hence that $U= T_p^+(\tilde{\gamma})$ and $L=2\inj(M)=\length{\tilde{\gamma}}/2$.  

It now follows that for each $q \in \tilde{\gamma}$ sufficiently close to $p$, there is a geodesic ray entering $\Sigma$ from $q$ and leaving $\Sigma$ at a point in $\tilde{\gamma}$ close to $p'$.  By repeating the argument in the last paragraph, it follows that for $q$ sufficiently close to $p$, every geodesic entering $\Sigma$ at $q$ first exits $\Sigma$ at its antipodal point $q'$ at time exactly $L$.  Let $U'$ denote the largest open interval around $p$ in $\tilde{\gamma}$ with the property that every ray entering $\Sigma$ from a point in $U$ exists $\Sigma$ at its antipode at time $L$.  Then $U'$ is clearly closed, whence $U'=\tilde{\gamma}.$  By \cite{Ba}, $\Sigma$ is a round hemisphere, completing the proof.
\end{proof}

{\tiny \sc Benjamin Schmidt, University of Chicago, schmidt@math.uchicago.edu}

{\tiny \sc Juan Souto, University of Michigan, jsouto@umich.edu}

\end{document}